\theoremstyle{plain}
\newtheorem{theorem}{Theorem}
\newtheorem{corollary}[theorem]{Corollary}
\newtheorem{lemma}[theorem]{Lemma}
\theoremstyle{definition}
\newtheorem{definition}[theorem]{Definition}
\theoremstyle{remark}
\newtheorem{remark}[theorem]{Remark}
\begin{document}

\title{\textsc{Elliptical Tempered Stable Distribution and Fractional Calculus}}

\author{\textsc{Hassan A. Fallahgoul${}$}\thanks{Universit\'{e} libre de Bruxelles, Solvay Brussels School of Economics and Management, ECARES. 50, Av Roosevelt CP114, B1050 Brussels, Belgium. Tel: +32(0)26502792; Fax: +32(0)26504218; \texttt{hfallahg@ulb.ac.be}}
\hspace{0.05cm} and \textsc{Young S. Kim${}$}
\thanks{Stony Broke University, College of Business;
\texttt{Aaron.Kim@stonybrook.edu}
}}
\maketitle

\vspace{-0.8cm}
\begin{abstract}
A definition for elliptical tempered stable distribution, based on the characteristic function, have been explained which involve a unique spectral measure. This definition provides a framework for creating a connection between infinite divisible distribution, and particularly elliptical tempered stable distribution, with fractional calculus. Finally, some analytical approximations for the probability density function of tempered infinite divisible distribution, which elliptical tempered stable distributions are a subclass of them, are considered.
\\[2mm]
\noindent \textit{Keywords}: Tempered Stable Distribution, Elliptical Distribution, Fractional PDE.
\\[2mm]
\end{abstract}

\newpage

\section{Introduction}
Stable and tempered stable distributions, which are a subclass of Infinite Divisible Distributions (hereafter IDD), have been used for solving many practical problems (see, for example, \cite{Kim01}, \cite{Rachev01}, and \cite{Fallahgoul01} among others.). The application of them in finance back to work of Mandelbrot in \cite{Mandelbrot01, Mandelbrot02}. As a matter of fact, before his work the usual assumption was that the distribution of price changes in a speculative series is approximately normal distribution. In the other words, in the financial time series context the distribution of the innovations (white noise) in a model like autoregressive moving average of returns series was normal distribution. But, Mandelbort showed that the returns from financial assets have the properties like leptokurtosis and skewness which can not support by normality assumption. On the other hand, the central limit theorem property, which is used based on normality assumption, is a powerful instrument in financial time series. Therefore, Mandelbort suggested a model that have the central limit theorem property as well as be able to capture leptokurtosis and skewness of return series. He used the stable distribution, in name of stable paretion, to reach suitable model. Therefore, the stable and tempered stable distributions, as a heavy tailed distribution, have been became the most popular alternative to the normal distribution which has been rejected by numerous empirical studies (see, for example, \cite{Rachev01}, and \cite{Rachev02}, among others). 

It is well known that stable distributions have infinite moments for some amounts of index parameter. In fact, they have infinite $ q-th $ moment for all $ q\geq\alpha $. This properties can be a disadvantage of the stable distributions in practical area. For example, in the context of time series a conclusion of literature is that the asset returns follow a model that the tails of it is heavier than normal and thinner than stable distribution (see, \cite{Rachev01}). In order to overcome this disadvantage, the L\'{e}vy measure of the stable distributions is multiplying by a tempering function. By this procedure a new class will be obtained, named tempered stable, such that they are not only IDD but also finite moments for all orders.\footnote{See, \cite{Rachev01}.}

The closed-form formula for the Probability Density Function (hereafter PDF) and Comulative Distribution Function (hereafter CDF) of the stable and tempered stable distribution, in the univarite and multivariate case, is not available. But, the characteristic function (hereafter CF) in the favorable form is accessible, which is used broadly instead of related PDF and CDF. But also, in the multivariate context the problem is still remain and working with related CF is not easy. For example, generating the multivariate stable and tempered stable distribution via the standard methods as fast fourier transform is so difficult even for lower dimensions. 

Based on the computational facilities, the univariate stable and tempered stable distributions successfully have been used for practical problems (see, for example \cite{Kim01}, \cite{Rachev01}, \cite{Rachev02},  and \cite{Rachev03} among others.). But, regarding to the multivariate case just a little works are exist, albeit for multivariate stable distributions (see, for example \cite{Nolan01}, \cite{Press01}, \cite{Fallahgoul01} and \cite{Fallahgoul02}). This fact backs to the definitions of each of them. The definition of the both multivariate stable and tempered stable distributions is based on a complicated spectral measures, but since the CF of the earlier can be defined based on some projections,\footnote{For more details see \cite{taqqu}.} working with this distribution in practical area is more convenient than multivariate tempered stable distributions.  

In this article we propose a suitable form of multivariate tempered stable distribution, named Elliptical Tempered Stable (hereafter ETS) distribution, and introduce some analytic approximations of the PDF of Tempered Infinite Divisible (hereafter TID), which ETS distribution is a subclass of it. More precisely, the contribution of the paper is threefold. First,  a well-define definition for ETS distribution based on a unique spectral measure is introduced, and also some theoretical properties of it are considered. Second, this paper provides a framework for getting a connection between fractional calculus and TID. In fact, some fractional Partial Differential Equations (hereafter PDE) are introduced that the fundamental solution of them give the entire family of the PDF of TID. Last, the analytic approximations for the PDF of TID and ETS distribution are given.

After discussing the notation to be used in Section 2, the paper proceeds as follows. The definition of ETS distribution and the link between multivariate tempered stable distributions and fractional PDE whose solution gives nearly all the TID distributions is explained in Section 3. The analytic approximations for PDF of TID distribution and ETS distribution have been presented in Section 4.


\section{Notation and Basic Definitions}
Here we present the notation and some basic definitions used in this paper. First a word on notation. A random variable and vector will be shown in capital and capital bold character, respectively. And also, an element of a vector is shown by non-bold words but still it is capital. 

Furthermore, the inner product and corresponding norm in $ \mathbb{R}^n $ are defined as $\langle\textbf{s},\textbf{t}\rangle=\displaystyle\sum_{i=1}^{n}s_it_i$ and $\|\textbf{s}\|=\langle\textbf{s}\rangle^{1/2}=(\displaystyle\sum_{i=1}^{n}s_i^2)^{1/2}$, respectively, where $\textbf{s}=(s_1,s_2,\cdots,s_n)$ and $\textbf{t}=(t_1,t_2,\cdots,t_n)$ are arbitrary elements in  $\mathbb{R}^n$. Let $\mathbb{S}^n$ be the unite sphere in $\mathbb{R}^n$: $\mathbb{S}^n=\{X\in\mathbb{R}^n: |X|=1\}$. For real $x$, the function $sign(x)$ is defined as
$$
sign(x)=\left\{
  \begin{array}{ll}
    \frac{x}{|x|}, & \hbox{$x\neq0$;} \\
    0, & \hbox{$x=0$.}
  \end{array}
\right.
$$
Let $\textbf{x}=(x_1, x_2, \cdots, x_n)$ be a random vector in $\mathbb{R}^n$ and let
\begin{equation}\nonumber
  \phi_{\textbf{x}}(\textbf{u})= E\exp(i\langle\textbf{u},\textbf{x}\rangle)= E\exp(i\displaystyle\sum_{i=1}^{n}u_ix_i),
\end{equation}
denote its Characteristic Function (hereafter CF). $\phi_{\textbf{x}}(\textbf{u})$ is also called the joint CF of the random variables $x_1, x_2, \cdots, x_n$. For any set as $ B $, $ \textbf{1}_{B} $ ia an indicator function, which $ \textbf{1}_{B}=1 $ for $ x\in B $ and $ \textbf{1}_{B}=0 $ for $x\neq B $. Furthermore, $ =^d $ means the both sides have the same distribution, and $ a\wedge b=\min\left(a,b \right)  $.

In general, there are two different definitions for elliptical distributions that are equivalent:\footnote{See, [13].} (1) definition based on the stochastic representation, and; (2) definition based on the CF. The former is explained by four components: a vector of locations, a nonnegative random variable as ${\mathcal R}_{\alpha}$, a k-dimensional random vector uniformly distributed on ${{\mathbb S}}^{n{\rm -}{\rm 1}}$ as $U^{(k)}\ $ that stochastically independent of ${\mathcal R}_{\alpha}$, and a matrix as ${{\mathbf \Lambda }\in {\mathbb R}}^{n\times k}$.\footnote{The rank of matrix $ \boldsymbol{\Lambda} $ is equal $ k $.} In contrast, the definition based on CF has three components: a vector of locations, a dispersion matrix that reproduces the elliptical, and a density generator that controls the tail thickness. The connection between these two definitions backs to this fact that every affinely transformed spherical random vector is elliptically distributed. 

Numerous distributions that are relevant for theoretical and practical works can be easily defined based on the stochastic representation definition of elliptical distributions: Gaussian, Laplace, Student-t, elliptical stable distribution (and hence Cauchy with $\alpha=1 $), and Kotz among others. For example, Gaussian $N({\boldsymbol\mu },{\rm \ }{\boldsymbol\Sigma })$ and Laplace ${\rm L(}{\boldsymbol\mu }{\rm ,\ }\lambda {\rm ,}{\mathbf \ }{\boldsymbol\Sigma }{\rm )}$ distributions are obtained if ${\mathcal R}_{\alpha}{\rm =}\sqrt{{\chi }^2_n}\ $ and ${\mathcal R}_{\alpha}{\rm =}\sqrt{{\chi }^2_n}\times \sqrt{\varepsilon (\lambda )}$, respectively where $\varepsilon (\lambda )$ is an exponential random variable with parameter $\lambda $ and stochastically independent of ${\chi }^2_n$.\footnote{It should be noted that the parameter $ \alpha $ in random variable $ {\mathcal R}_{\alpha} $ controls the tail thickness property.}

Before proceeding, recall that there is a L\'{e}vy measure related to the stable and tempered stable random variables, because the both of them are an IDD. Also, since tempered stable random variable has been obtained via tilting an IDD (stable distribution) and on the other hand tilting an IDD density leads to tilting of corresponding L\'{e}vy measure. So, the L\'{e}vy measure of the tempered stable distribution has been obtained via tilting the L\'{e}vy measure of a stable distribution. It should be noted that each IDD has been uniquely defined by the triple that named the \textit{L\'{e}vy triple}. Also, the \textit{L\'{e}vy-Khinchin} representation of the CF of the IDD which is based on the the L\'{e}vy triple, is very useful in study the theoretical properties of the stable random vectors and specially, more valuable, for different kinds of the tempered stable random vectors.\footnote{There are two major definitions for multivariate tempered stable distributions: (1) Rosinski's \cite{Rosinski01} multivariate tempered stable, and; (2) Bianchi et al. \cite{Kim02} multivariate tempered stable which known as Tempered Infinite  Divisible (TID). } 

In addition, we will see that the ETSD is a subclass of the symmetric Multivariate Normal Tempered Stable (MNTS) distribution.\footnote{MNTS distribution is obtained by a simple extension of normal tempered stable distribution. If the MNTS is symmetric, then the related random vector have the stochastic representation of elliptical tempered stable distribution. On the other hands, each symmetric MNTS distribution is a TID distribution, so we can reach the CF of symmetric MNTS distribution based on the spectral measure if we be able to reach the CF of symmetric TID distribution, see \cite{Kim01}.} And on the other hands, a symmetric MNTS is a subclass of the TID. So,  we will continue this section with the structure of the L\'{e}vy measure of the TID, which plays important role in our paper.  But first, recall the structure of the IDD. 

The CF (\textit{L\'{e}vy-Khinchin} representation ) of an IDD $ \mu $ on $ \mathbb{R}^n $ can be written as $ \Phi(\textbf{u})=\exp\left( \Psi(\textbf{u})\right)  $ where
\begin{equation}
\Psi(\textbf{u})=-\dfrac{1}{2}\left\langle \textbf{u},\textbf{A}\textbf{u}\right\rangle +i\left\langle \textbf{b}, \textbf{u}\right\rangle +\int_{\mathbb{R}^n}\left( e^{i\left\langle \textbf{u}, \textbf{x}\right\rangle }-1-i\dfrac{\left\langle \textbf{u}, \textbf{x}\right\rangle }{1+|\textbf{x}|^2} \right) M(dx),
\end{equation}
$ \textbf{A} $ is a symmetric nonnegative $ n\times n $ matrix, $ \textbf{b}\in\mathbb{R}^n $, and $ M $ satisfies
\begin{equation}\nonumber
\int_{\mathbb{R}^n}\left(\parallel x\parallel^2\wedge 1 \right)M(dx)<\infty, \qquad M({0})=0.
\end{equation}
The measure $ \mu $ uniquely defined by the L\'{e}vy triple $ (\textbf{A, \textbf{b, M}}) $, and we write $ \mu\sim IDD(\textbf{A, \textbf{b, M}} )$.

We will finish this section via the definition of the TID based on the Levy measure.

Based on the Rosinski's definition for multivariate tempered stable distribution ( Definition 2.1 of \cite{Rosinski01}), the L\'{e}vy measure of a multivariate tempered stable random vector without the Gaussian part in the polar coordinates is 
\begin{equation}\label{eq03}
M(dr,du)=r^{-\alpha-1}q(r,u)dr\sigma(du),
\end{equation}
where $ q:(0,\infty)\times  \mathbb{S}^{n-1}\longmapsto (0,\infty) $ is a Borel function such that $ q(\cdot,u) $ is completely monotone with $ q(\infty,u)=0 $  for each $ u\in\mathbb{S}^{n-1} $, and known as \textit{tempering function}. Likewise, $ \sigma $ is a finite measure on $ \mathbb{S}^{n-1} $ and $ 0<\alpha<2 $. On the other hands, the L\'{e}vy measure of a stable distribution on $ \mathbb{R}^n $ in polar coordinate is
\begin{equation}
M_0(dr,du)=r^{-\alpha-1}dr\sigma(du).
\end{equation}

A tempered stable distribution is characterized by the spectral measure as defined in Definition 2.4 of \cite{Rosinski01}, which based on Theorem 2.3 of \cite{Rosinski01} the L\'{e}vy measure of the tempered stable distributions can be written in the form
\begin{equation}\label{equ04}
M(A)=\int_{\mathbb{R}^n}\int_{0}^{\infty}I_A(tx)t^{-\alpha-1}e^{-t}dtR(dx),\quad A\in\textbf{B}(\mathbb{R}^n),
\end{equation}
where $ R $ is the spectral measure of the tempered stable distribution on $ \mathbb{R}^n $ such that
\begin{equation}\nonumber
\int_{\mathbb{R}^n}\left(\parallel x\parallel^2\wedge \parallel x\parallel^{\alpha} \right)R(dx)<\infty, \qquad R({0})=0.
\end{equation}
Now, let $ \left\lbrace Q(\cdot,u)\right\rbrace_{u\in S^{n-1}}  $ be a measurable family of Borel measures on $ (0,\infty) $, then the tempering function $ q $ in (\ref{eq03}) can be represented as\footnote{Based on the structure of this function different kinds of distribution in the multivariate can be obtained. In fact, all of them (i.e., $ q(r,u) $ and $ q(r^2,u) $ are a subclass of Grabchak which the $ q $ is $ q(r^p,u)$ and $ p>0 $ (See, \cite{Grabchak}).}
\begin{equation}
q(r,u)=\int_{0}^{\infty}e^{-rs}Q(ds|u). 
\end{equation}
Also, let measure $ Q $  be as
\begin{equation}
Q(A)=\int_{\mathbb{S}^{n-1}}\int_{0}^{\infty}I_A(ru)Q(dr|u)\sigma(du),\quad A\in\textbf{B}(\mathbb{R}^n),
\end{equation}
then $ Q({0})=0 $. Now, by defining measure $ R $ as
\begin{equation}\label{eq04}
R(A)=\int_{\mathbb{R}^{n}}I_A(\frac{x}{\parallel x\parallel^2})\parallel x\parallel^{\alpha}Q(dx),\quad A\in\textbf{B}(\mathbb{R}^n),          
\end{equation}
we can see that measure $ R $ in equation (\ref{eq04}) satisfies in the conditions of the spectral measure of a tempered stable distribution.\footnote{See Theorem 2.3 \cite{Rosinski01}.}

Bianchi et al. \cite{Kim02} based on the Rosinski's approach in \cite{Rosinski01} define another class of multivariate tempered stable random vector, known as TID. In fact, they are modify the radial component of $ M_0 $ and obtain a probability distribution with lighter tails than stable ones. 
\begin{definition}(TID measure, see \cite{Kim02})
Let $ \mu $ be an IDD measure on $ \mathbb{R}^n $ without Gaussian part (\textbf{A}=0). $ \mu $ is called TID if its L\'{e}vy measure $ M$ can be written in the polar coordinates as equation (\ref{eq03}), where $ \alpha $ is a real number, $ \alpha\in[0,2) $, $ \sigma $ is a finite measure on $ \mathbb{S}^{n-1} $, and $ q:(0,\infty)\times  \mathbb{S}^{n-1}\longmapsto (0,\infty) $ is a Borel function as
\begin{equation}
q(r,u)=\int_{0}^{\infty}e^{-r^2s^2}Q(ds|u),
\end{equation}
 with $ \left\lbrace Q(\cdot,u)\right\rbrace_{u\in S^{n-1}}  $ be a measurable family of Borel measures on $ (0,\infty) $.
\end{definition}
\begin{remark}
The difference  between the Rosinski and TID definition is just related to the tempering function ($ q(r,u) $).
\end{remark}
\begin{definition}(CF of TID, see \cite{Kim02})\label{TID:CF01}
Let $ \mu $ be a TID distribution with L\'{e}vy measure given by \ref{equ04}, $ \alpha\in[0,2) $, and $ \alpha\neq1 $. If the distribution has finite mean, i.e., $ \int_{\mathbb{R}^n}\parallel \textbf{x}\parallel \mu(dx)<\infty$, then
\begin{equation}\label{equ07}
\Phi_{\textbf{X}}(\textbf{u})=E\exp \left\lbrace i\left\langle \textbf{u},\textbf{X}\right\rangle \right\rbrace =\exp\left\lbrace \int_{\mathbb{R}^n}\psi_{\alpha}(\left\langle \textbf{u},\textbf{X}\right\rangle )R(dx) +i\left\langle \textbf{u},\textbf{m} \right\rangle \right\rbrace, 
\end{equation}
where
\begin{eqnarray}
\nonumber\psi_{\alpha}(s)=2^{-\alpha/2-1}\left[ \Gamma\left( -\dfrac{\alpha}{2}\right) \left( _1F_1\left( -\dfrac{\alpha}{2},\dfrac{1}{2};\left( \dfrac{i\sqrt{2}s}{2}\right)^2 \right)-1 \right)\right.\\
\left.+i\sqrt{2}s\Gamma\left( \dfrac{1-\alpha}{2}\right) \left( _1F_1\left( \dfrac{1}{2}-\dfrac{\alpha}{2},\dfrac{3}{2};\left( \dfrac{i\sqrt{2}s}{2}\right)^2 \right)-1 \right) \right], 
\end{eqnarray}
and $ \textbf{m}=\int_{\mathbb{R}^n} x\zeta(dx)$, and $ _1F_1 $ is the \textit{Kummer or confluent hypergeometric function of first kind}. Furthermore, if $ 0<\alpha<1 $, then the CF can be written in the alternative form
\begin{equation}
\Phi_{\textbf{X}}(\textbf{u})=E\exp \left\lbrace i\left\langle \textbf{u},\textbf{X}\right\rangle \right\rbrace =\exp\left\lbrace \int_{\mathbb{R}^n}\psi_{\alpha}^0(\left\langle \textbf{u},\textbf{X}\right\rangle )R(dx) +i\left\langle \textbf{u},\textbf{m}_0 \right\rangle \right\rbrace, 
\end{equation}
where
\begin{eqnarray}
\nonumber\psi_{\alpha}(s)=2^{-\alpha/2-1}\left[ \Gamma\left( -\dfrac{\alpha}{2}\right) \left( _1F_1\left( -\dfrac{\alpha}{2},\dfrac{1}{2};\left( \dfrac{i\sqrt{2}s}{2}\right)^2 \right)-1 \right)\right.\\
\left.+i\sqrt{2}s\Gamma\left( \dfrac{1-\alpha}{2}\right) \left( _1F_1\left( \dfrac{1}{2}-\dfrac{\alpha}{2},\dfrac{3}{2};\left( \dfrac{i\sqrt{2}s}{2}\right)^2 \right) \right) \right]. 
\end{eqnarray}
\end{definition}

If random vector $ \textbf{X} $ be an TID, it will be given by $ \textbf{X}\sim TID_{\alpha}(R,\textbf{m}) $ and in the alternative form by $\textbf{X}\sim TID_{\alpha}^0(R,\textbf{m}_0) $ where $ \alpha\in(0,2) $ and $ \alpha\in(0,1) $, respectively. It should be noted that the spectral measure of the tempered stable distribution, $ R(dx) $, is unique.

\section{Elliptical Tempered Stable Distribution and Fractional PDE}
In this section, we provide the definition of the ETSD and then the link between fractional PDE and TID are presented. First, we provide the CF of the symmetric TID random vector.
\begin{lemma}\label{pro01}
$ \textbf{X}$ is symmetric TID random vector in $ \mathbb{R}^n $ with $ 0<\alpha<2 $ if and only if there exist a unique spectral finite measure such that
\begin{equation}\label{equ05}
 E\exp \left\lbrace i\left\langle \textbf{u},\textbf{X}\right\rangle \right\rbrace =\exp\left\lbrace 2^{-\alpha/2-1}\Gamma(-\dfrac{\alpha}{2}) \int_{\mathbb{R}^n} \left( _1F_1\left( -\dfrac{\alpha}{2},\dfrac{1}{2};\dfrac{-1}{2} \left\langle \textbf{u},\textbf{X}\right\rangle^2 \right)-1 \right) R(dx)  \right\rbrace. 
\end{equation}

\end{lemma}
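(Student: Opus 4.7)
The plan is to derive (\ref{equ05}) directly from the general TID characteristic function in Definition~\ref{TID:CF01} by exploiting symmetry. I will split the kernel $\psi_{\alpha}(s)$ appearing in (\ref{equ07}) into its even and odd parts in $s$, namely
\begin{equation*}
\psi_{\alpha}^{\mathrm{e}}(s) = 2^{-\alpha/2-1}\Gamma(-\alpha/2)\left[{}_1F_1\!\left(-\tfrac{\alpha}{2},\tfrac{1}{2};(i\sqrt{2}s/2)^{2}\right)-1\right],
\end{equation*}
\begin{equation*}
\psi_{\alpha}^{\mathrm{o}}(s) = 2^{-\alpha/2-1}i\sqrt{2}\,s\,\Gamma\!\left(\tfrac{1-\alpha}{2}\right)\left[{}_1F_1\!\left(\tfrac{1-\alpha}{2},\tfrac{3}{2};(i\sqrt{2}s/2)^{2}\right)-1\right].
\end{equation*}
Since $(i\sqrt{2}s/2)^{2}=-s^{2}/2$ and ${}_1F_1$ has a real power-series expansion in its third argument, $\psi_{\alpha}^{\mathrm{e}}$ is real-valued and even in $s$, while $\psi_{\alpha}^{\mathrm{o}}$ is purely imaginary and odd in $s$.

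For the forward implication, a random vector $\textbf{X}$ is symmetric iff $\Phi_{\textbf{X}}(\textbf{u})=\Phi_{\textbf{X}}(-\textbf{u})$. In the exponent of (\ref{equ07}), the drift term $i\langle \textbf{u},\textbf{m}\rangle$ and the integral of $\psi_{\alpha}^{\mathrm{o}}(\langle \textbf{u},x\rangle)$ both change sign under $\textbf{u}\mapsto -\textbf{u}$, whereas the integral of $\psi_{\alpha}^{\mathrm{e}}(\langle \textbf{u},x\rangle)$ is invariant. The uniqueness of the TID triple $(R,\textbf{m})$, which follows from the uniqueness of the spectral measure noted right after Definition~\ref{TID:CF01}, then forces $\textbf{m}=0$ and the image of $R$ under $x\mapsto -x$ to coincide with $R$; the latter kills $\int \psi_{\alpha}^{\mathrm{o}}(\langle \textbf{u},x\rangle)R(dx)$ because the integrand becomes odd in $x$. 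Substituting $(i\sqrt{2}s/2)^{2}=-s^{2}/2$ into the surviving even piece yields exactly the right-hand side of (\ref{equ05}).

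For the converse, if the CF is given by (\ref{equ05}), the integrand ${}_1F_1(-\alpha/2,1/2;-\tfrac{1}{2}\langle\textbf{u},x\rangle^{2})-1$ is real-valued and even in $\textbf{u}$, so $\Phi_{\textbf{X}}$ is real and even, and $\textbf{X}$ is symmetric. The same algebraic identification shows the expression coincides with the TID CF of (\ref{equ07}) taken with a symmetric spectral measure $R$ and $\textbf{m}=0$, so $\textbf{X}$ is TID; uniqueness of $R$ is inherited from the same general statement. The main technical obstacle I anticipate is justifying that symmetry of $\Phi_{\textbf{X}}$ forces \emph{both} $\textbf{m}=0$ and the symmetry of $R$, rather than some hidden cancellation between the drift and the odd spectral contribution; this is exactly where the uniqueness of the TID representation must be invoked. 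A secondary issue is the borderline case $\alpha=1$, excluded from (\ref{equ07}) but included in the hypothesis $0<\alpha<2$; it can be covered either by a continuity argument in $\alpha$ or by returning to the L\'{e}vy--Khinchin representation with the TID polar form (\ref{eq03}) and performing the radial integration with the tempering function $q(r,u)=\int_{0}^{\infty}e^{-r^{2}s^{2}}Q(ds\mid u)$ directly.
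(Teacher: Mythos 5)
Your proof is correct and follows essentially the same route as the paper's (very terse) argument: symmetry of $\textbf{X}$ is equivalent to the characteristic function being real, and passing to the symmetrized measure $\tfrac{1}{2}\left(R(dx)+R(-dx)\right)$ kills the odd, purely imaginary part of $\psi_{\alpha}$ together with the drift term. Your write-up is in fact more complete than the paper's, since you make the even/odd decomposition of $\psi_{\alpha}$ explicit, invoke uniqueness of the pair $(R,\textbf{m})$ to rule out a hidden cancellation between the drift and the odd spectral contribution, and flag the borderline case $\alpha=1$, which Definition~\ref{TID:CF01} excludes (via the pole of $\Gamma\left(\tfrac{1-\alpha}{2}\right)$ in the odd part) but the lemma's hypothesis $0<\alpha<2$ includes.
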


\begin{proof}

The proof is straightforward. It comes from this fact that a necessary and sufficient condition for an arbitrary random vector $ \textbf{X}$ to be symmetric is that its CF be real. And also, by defining a symmetric measure as $ \frac{1}{2}\left(R(dx)+R(-dx) \right)  $ and replacing with measure $ R(dx) $ the appropriate result is obtained.

\end{proof}
\begin{remark}
In Lemma \ref{pro01}, random vector $ \textbf{X} $ is symmetric about zero. One can easily extend it to any point as \textbf{m} of $ \mathbb{R}^n $, which it is more convenience for the elliptical distributions.
\end{remark}
The following definition is a direct corollary of Lemma \ref{pro01}. Since, a random vector ${\mathbf X}$ is elliptically distributed if and only if there exist a vector $\textbf{m}$, a symmetric positive define matrix ${\mathbf \Sigma }$ and a function $\varphi {\rm :}{{\mathbb R}}^{{\rm +}}\mapsto {\mathbb R}$ such that the CF of ${\mathbf X}$ is of the form\footnote{See, \cite{Frahm}.} 
\begin{equation}\label{equ01}
\Phi_\textbf{X}\left({\textbf{u}}\right){\rm =exp}\left(i\left\langle \textbf{m}, \textbf{u} \right\rangle\right){\rm \times }\varphi {\rm (}\textbf{u}^T{\mathbf \Sigma }\textbf{u}{\rm ).}
\end{equation}
So, if random vector $ \textbf{X} $ be a symmetric TID, then the CF of symmetric TID (equation (\ref{equ05})) can be shown as equation (\ref{equ01}), where the symmetric positive define matrix ${\mathbf \Sigma }$ can be extracted from the unique spectral measure $R{\rm (}dx{\rm )}$. 
 
It should be noted that the extracting the symmetric positive define matrix ${\mathbf \Sigma }$, due to the complicated form of spectral measure $R{\rm (}dx{\rm )}$, is difficult. But, since each symmetric MNTS distribution is a subclass of the TID distribution, which it has the CF as Definition \ref{TID:CF01},\footnote{See \cite{Kim02}.} and also, every symmetric MNTS distribution is an elliptical distribution,\footnote{See \cite{Rachev03}.} the existence of this matrix in the mentioned form can be proved. 

\begin{definition}\label{def05}
A random vector $ \textbf{X} $ is said an elliptical tempered stable random vector if there exist a unique measure such that the CF of it be as equation (\ref{equ05}).\footnote{It should be noted, due to the uniqueness of the spectral measure, Definition \ref{def05} is well defined.} 
\end{definition}

Before explaining the connection between tempered stable random vectors and fractional PDE, some theoretical properties of ETSD  are given. The symmetric MNTS distribution, which is a subclass of TID distribution, with the CF as Definition \ref{TID:CF01}, is come from a stochastic process. By setting the time variable equal 1, one can get this distribution (symmetric MNTS) from related stochastic process.\footnote{See, \cite{Kim01}.} Also, since the stable and tempered stable distributions don't have the closed form formula for PDF and CDF, the structure of their CF plays important role in the practical area. On the other hands, since the stable and tempered stable distribution are a subclass of IDD, and also based on the \textit{L\`{e}vy-Khinchine} formula the CF of the IDD can be obtained. Therefore, the L\`{e}vy-Khinchine formula provides a very useful tool for studying theoretical properties of these distributions. Also, because the uniqueness of the CF of a distribution, we check some properties of ETSD based the CF of symmetric MNTS which is exist in the suitable form. 

The CF of the symmetric MNTS process can be obtained from related multivariate Lévy-Khinchine formula.\footnote{See, \cite{Kim01}} The CF of the symmetric MNTS process is given by 
\begin{equation} \label{GrindEQ__43_} 
{\Psi }_X{\rm (}u{\rm )=exp}\left[t{\rm \times }\left\{\frac{{\rm 2}\lambda }{\alpha }\left({\rm 1-}{\left({\rm 1-}\frac{{\rm 1}}{\lambda }\left({\rm -}\frac{{\rm 1}}{{\rm 2}}u^T\Sigma u\right)\right)}^{\frac{\alpha }{{\rm 2}}}\right){\rm +}i{\mu }^Tu\right\}\right], 
\end{equation} 
so, by replacing $t{\rm =1}$ in equation \eqref{GrindEQ__43_} we will get the CF of the ETSD as follow 
\begin{equation} \label{equ06} 
{\Psi }_X{\rm (}u{\rm )=exp}\left[\frac{{\rm 2}\lambda }{\alpha }\left({\rm 1-}{\left({\rm 1-}\frac{{\rm 1}}{\lambda }\left({\rm -}\frac{{\rm 1}}{{\rm 2}}u^T\Sigma u\right)\right)}^{\frac{\alpha }{{\rm 2}}}\right){\rm +}i{\mu }^Tu\right]. 
\end{equation}  
\begin{remark}
As mentioned before, both definitions of elliptical distribution are equivalent. So, from equation (\ref{equ06}) we will be able to reach to the stochastid representation definition of the elliptical distribution, which In the next theorem, we will used this definition.
\end{remark}

Corollary \ref{The06} is the answer of this question that is there any connection between the PDF of an ETS random vector with related Gaussian underlying vector? If the answer is yes, the next question is that what is this connection? Corollary \ref{The06} explains more details in this regard.  But, first the definition of a subordinator is need. 

Univariate L\'{e}vy processes $ S_t, t\geq0 $ on $ \mathbb{R} $ with almost surely non-decreasing trajectories are called \textit{subordinators}. Now, let $ \alpha\in(0,2) $ and $ \theta>0 $. The purely non-Gaussian infinite divisible random variable $ T $ whose CF is given by 
\begin{equation*}
\phi_N(u)=\exp\bigg( -\dfrac{2\theta^{1-\frac{\alpha}{2}}}{\alpha}((\theta-iu)^{\frac{\alpha}{2}}-\theta^{\frac{\alpha}{2}})\bigg), 
\end{equation*}
is referred to as the tempered stable subordinator with parameter $ (\alpha,\theta) $.

\begin{corollary}\label{The06}
Let \textbf{X} be an ETSD as 
\begin{equation}
\textbf{X}=^d\boldsymbol{\mu}+\textbf{C}\sqrt{T}\textbf{N},
\end{equation}
where ${\boldsymbol\mu }{\rm =(}{\mu }_{{\rm 1}},{\mu }_{{\rm 2}}{\rm ,}\cdots {\rm ,}{\mu }_n{\rm )}\in {{\mathbb R}}^n,{\mathbf C}{\rm =(}C_{{\rm 1}},C_{{\rm 2}}{\rm ,}\cdots {\rm ,}C_n{\rm )}\in {\rm (0,}\infty {{\rm )}}^n$ and $\textbf{N}=(N_1,N_2,\cdots,N_n)$ be a n-dimensional standard normal distributed with covariance matrix ${\left\{{\rho }_{r,s}\right\}}^n_{r,s{\rm =1}}$. And also, $T$ is the tempered stable subordinator and independent of ${\mathbf N}$. Then there is a one-to-one correspondence between the PDF of \textbf{N} and that of \textbf{X}.
\end{corollary}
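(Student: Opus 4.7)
The plan is to condition on the subordinator $T$ and exploit independence to express $f_{\textbf{X}}$ as an integral transform of $f_{\textbf{N}}$. Interpreting $\textbf{C}$ as the diagonal matrix $\mathrm{diag}(C_1,\ldots,C_n)$, the stochastic representation $\textbf{X}=\boldsymbol{\mu}+\textbf{C}\sqrt{T}\textbf{N}$ makes the conditional law of $\textbf{X}$ given $T=t$ simply an affine push-forward of $\textbf{N}$, so a one-line change of variables delivers the conditional density in closed form in terms of $f_{\textbf{N}}$.

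Concretely, I would first write, using independence of $T$ and $\textbf{N}$,
\begin{equation*}
f_{\textbf{X}}(\textbf{x}) \;=\; \int_{0}^{\infty} f_{\textbf{X}\mid T=t}(\textbf{x})\, f_{T}(t)\, dt,
\end{equation*}
where $f_T$ denotes the density of the tempered stable subordinator with parameter $(\alpha,\theta)$. Computing the Jacobian of $\textbf{x}=\boldsymbol{\mu}+\sqrt{t}\,\mathrm{diag}(\textbf{C})\,\textbf{n}$ then yields
\begin{equation*}
f_{\textbf{X}\mid T=t}(\textbf{x}) \;=\; \frac{1}{t^{n/2}\prod_{i=1}^{n} C_i}\, f_{\textbf{N}}\!\left(\frac{x_1-\mu_1}{C_1\sqrt{t}},\,\ldots,\,\frac{x_n-\mu_n}{C_n\sqrt{t}}\right),
\end{equation*}
and substitution produces an explicit integral formula that determines $f_{\textbf{X}}$ from $f_{\textbf{N}}$; this is the forward half of the correspondence.

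The step I expect to be the main obstacle is establishing that the map $f_{\textbf{N}}\mapsto f_{\textbf{X}}$ is injective, so the correspondence is genuinely one-to-one rather than merely one-way. My plan is to argue this at the level of characteristic functions: by conditioning, the CF of $\textbf{X}$ factors as
\begin{equation*}
\Phi_{\textbf{X}}(\textbf{u}) \;=\; e^{i\langle\boldsymbol{\mu},\textbf{u}\rangle}\, \mathcal{L}_{T}\!\left(\tfrac{1}{2}\,\textbf{u}^{T}\mathrm{diag}(\textbf{C})\,\Sigma_{\textbf{N}}\,\mathrm{diag}(\textbf{C})\,\textbf{u}\right),
\end{equation*}
where $\mathcal{L}_T$ is the Laplace transform of the tempered stable subordinator, which is strictly positive, analytic and non-vanishing on $[0,\infty)$. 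Inverting $\mathcal{L}_T$ on a neighbourhood of the origin lets one read off the quadratic form in $\textbf{u}$ from $\Phi_{\textbf{X}}$, and hence identify $\Sigma_{\textbf{N}}$ uniquely; combined with the Gaussian functional form of $f_{\textbf{N}}$, this reconstructs $f_{\textbf{N}}$ from $f_{\textbf{X}}$. The forward formula together with this inversion closes the bijection and proves the corollary.
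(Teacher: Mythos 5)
Your proposal is correct, and its essential core coincides with the paper's own argument: both reduce the question to characteristic functions and show that the law of $\textbf{X}$ determines the quadratic form $\textbf{u}^{T}\Sigma_{\textbf{N}}\textbf{u}$, hence the covariance of $\textbf{N}$, hence the law of $\textbf{N}$. The differences are matters of completeness and explicitness, both in your favour. First, the paper only proves the ``backward'' injectivity (if $\boldsymbol{\mu}+\textbf{C}\sqrt{T}\textbf{N}=^{d}\boldsymbol{\mu}+\textbf{C}\sqrt{T}\textbf{N}'$ then $\textbf{N}=^{d}\textbf{N}'$), treating the forward direction as immediate from the stochastic representation; your conditioning-on-$T$ mixture formula makes the forward map $f_{\textbf{N}}\mapsto f_{\textbf{X}}$ explicit, which is a useful addition though not logically required. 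Second, the paper equates the two explicit MNTS exponents and silently cancels the outer function $s\mapsto\frac{2\lambda}{\alpha}\bigl(1-(1+\frac{s}{2\lambda})^{\alpha/2}\bigr)$ to conclude that the two quadratic forms agree; this cancellation is only valid because that function is injective, and your Laplace-transform formulation makes precisely this hidden step explicit. One small refinement: the property you should invoke is not that $\mathcal{L}_{T}$ is positive and non-vanishing but that it is \emph{strictly decreasing} on $[0,\infty)$ (which holds since $T>0$ almost surely and is nondegenerate); strict monotonicity is what gives invertibility on the range and lets you read off the quadratic form. With that wording fixed, your argument is a slightly more careful and more complete version of the paper's proof rather than a genuinely different one.
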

\begin{proof}
Based on equation (\ref{equ06}), we will have
\begin{equation}\label{equ16}
\Psi_{X}(u) =\exp\left[ \dfrac{2\lambda}{\alpha}\left(1-\left(1-\dfrac{1}{\lambda}\left(-\dfrac{1}{2}\sum_{i=1}^{n}\sum_{j=1}^{n}u_iu_j\rho_{i,j} \right)  \right)^{\frac{\alpha}{2}}  \right) +i\mu^T u \right],  
\end{equation}
where $ \rho_{i,j}=Cov(N_i,N_j) $.

Now, let  $ \textbf{N}' $ be a standard normal vector on $ \mathbb{R}^n $ with covariance matrix $ \rho' $. We must show that if $ \textbf{X}=\boldsymbol\mu +C\sqrt{T}\textbf{N}  $ has the same distribution as $ \textbf{X}'=\boldsymbol\mu +C\sqrt{T}\textbf{N}'$, then $ \textbf{N}' $ has the same distribution as $ \textbf{N} $. If two distributions be same, then the CF of them are same. So, from Equation (\ref{equ16}) we will have
\begin{equation}\nonumber
\nonumber\Psi_{\textbf{X}}(u) =\Psi_{\textbf{X}'}(u), 
\end{equation}
or
\begin{eqnarray}\nonumber
\nonumber\exp\left[ \dfrac{2\lambda}{\alpha}\left(1-\left(1-\dfrac{1}{\lambda}\left(-\dfrac{1}{2}\sum_{i=1}^{n}\sum_{j=1}^{n}u_iu_j\rho_{i,j} \right)  \right)^{\frac{\alpha}{2}}  \right) +i\mu^T u \right]\\
\nonumber=\\
\nonumber\exp\left[ \dfrac{2\lambda}{\alpha}\left(1-\left(1-\dfrac{1}{\lambda}\left(-\dfrac{1}{2}\sum_{i=1}^{n}\sum_{j=1}^{n}u_iu_j\rho_{i,j}' \right)  \right)^{\frac{\alpha}{2}}  \right) +i\mu^T u \right], 
\end{eqnarray}
therefore
\begin{equation}\nonumber
\sum_{i=1}^{n}\sum_{j=1}^{n}u_iu_j\rho_{i,j}=\sum_{i=1}^{n}\sum_{j=1}^{n}u_iu_j\rho_{i,j}', 
\end{equation}
for any real number $ u_1,u_2,\cdots,u_n $. This identity between two polynomials leads to
\begin{equation}\nonumber
\rho_{i,j}=\rho_{i,j}'. 
\end{equation}
Hence $ \textbf{N}=^d\textbf{N}' $.
\end{proof}

The next question is that what is the relation between two different ETS random vectors? Corollary \ref{The02} explains the relation between two different ETSD random vectors which the underlying vectors one of them is standard Gaussian and the other is any arbitrary Gaussian distribution.
\begin{corollary}\label{The02}
Let \textbf{X} be a centered elliptical tempered stable random vector in $ \mathbb{R}^n $ with standard normal underlying vector \textbf{N}.\footnote{The mean of centered random vector is that its mean is zero.} Then for any centered elliptical tempered stable random vector $ \textbf{X}' $ in $ \mathbb{R}^n $ there is a lower-triangular $n\times n $ matrix $ \Delta $ such that 
\begin{equation}\nonumber
\textbf{X}'=^d\Delta\textbf{X}.
\end{equation}
\end{corollary}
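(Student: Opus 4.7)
The plan is to compare characteristic functions using the explicit expression in \eqref{equ06} and then realize the required linear transformation via the Cholesky factorization. By Definition \ref{def05} and equation \eqref{equ06}, a centered ETSD is completely determined (once the tempering parameters $\alpha$ and $\lambda$ are fixed) by a symmetric positive definite dispersion matrix $\Sigma$, and its CF takes the form
\begin{equation*}
\Psi(\mathbf{u})=\exp\!\left[\frac{2\lambda}{\alpha}\left(1-\left(1+\frac{1}{2\lambda}\mathbf{u}^{T}\Sigma\mathbf{u}\right)^{\alpha/2}\right)\right].
\end{equation*}
The starting observation is that the hypothesis ``standard normal underlying vector'' from Corollary \ref{The06} forces the dispersion matrix of $\mathbf{X}$ to be the identity, $\Sigma_{\mathbf{X}}=I$; this isolates $\mathbf{X}$ as a canonical representative of the family.

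The next step is to attach to $\mathbf{X}'$ its dispersion matrix $\Sigma'$. Because $\Sigma'$ is symmetric and positive definite, the Cholesky factorization produces a unique lower-triangular $n\times n$ matrix $\Delta$ with strictly positive diagonal satisfying $\Delta\Delta^{T}=\Sigma'$. I would then compute the CF of the linearly transformed vector $\Delta\mathbf{X}$ by the change-of-variable identity $\Psi_{\Delta\mathbf{X}}(\mathbf{u})=\Psi_{\mathbf{X}}(\Delta^{T}\mathbf{u})$; substituting into the display above and using $\Sigma_{\mathbf{X}}=I$ turns the quadratic form into $\mathbf{u}^{T}\Delta\Delta^{T}\mathbf{u}=\mathbf{u}^{T}\Sigma'\mathbf{u}$, which is exactly $\Psi_{\mathbf{X}'}(\mathbf{u})$. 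Uniqueness of the characteristic function, together with the uniqueness of the spectral measure emphasized after Definition \ref{def05}, yields $\mathbf{X}'\stackrel{d}{=}\Delta\mathbf{X}$.

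The main obstacle I foresee is not the algebra but a bookkeeping issue: one must make sure that $\mathbf{X}$ and $\mathbf{X}'$ share the tempering parameters $\alpha$ and $\lambda$ (otherwise a linear map cannot send one CF to the other), and one must justify that the dispersion matrix $\Sigma'$ is genuinely positive definite, not merely positive semidefinite, so that Cholesky yields a lower-triangular $\Delta$ with positive diagonal. Both points can be handled by invoking Corollary \ref{The06}: the one-to-one correspondence between the underlying Gaussian vector $\mathbf{N}'$ and $\mathbf{X}'$ transfers the positive definiteness of the covariance of $\mathbf{N}'$ to that of $\Sigma'$, and the common subordinator $T$ in the stochastic representation forces the parameters $\alpha,\lambda$ to agree across the ETSD class under consideration.
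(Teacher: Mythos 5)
Your proposal is correct, but it travels a different road than the paper. You work entirely at the level of characteristic functions: you read off from equation \eqref{equ06} that a centered ETSD is parametrized by its dispersion matrix $\Sigma$ (once $\alpha$ and $\lambda$ are fixed), note that the canonical $\mathbf{X}$ has $\Sigma_{\mathbf{X}}=I$, and then use the elementary identity $\Psi_{\Delta\mathbf{X}}(\mathbf{u})=\Psi_{\mathbf{X}}(\Delta^{T}\mathbf{u})$ together with the Cholesky factor $\Delta\Delta^{T}=\Sigma'$ to match CFs. The paper instead argues through the stochastic representation: it writes the elliptical tempered stable process as a subordinated Brownian motion $\mathbf{X}_t=\mathbf{B}_{S_t}$, factors $\mathbf{B}_s=\mathrm{chol}(\boldsymbol{\Sigma})\mathbf{W}_s$ with $\mathbf{W}$ a standard Wiener process, further splits $\boldsymbol{\Sigma}=\mathrm{diag}(\sigma)\mathbf{P}\,\mathrm{diag}(\sigma)$, and sets $t=1$, so that $\Delta=\mathrm{diag}(\sigma)\,\mathrm{chol}(\mathbf{P})$. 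Both proofs hinge on the same Cholesky decomposition; yours buys a self-contained, purely distributional argument that needs nothing beyond equation \eqref{equ06} and uniqueness of the CF, and it makes explicit two hypotheses the paper leaves tacit (that $\mathbf{X}$ and $\mathbf{X}'$ must share the tempering parameters $\alpha,\lambda$ --- without which no linear map can carry one CF to the other --- and that $\Sigma'$ must be strictly positive definite for the lower-triangular factor to exist). The paper's route buys the process-level statement for all $t$ and makes the role of the common subordinator visible, but it never explicitly reconnects the final display to $\mathbf{X}'$. Your version is, if anything, the more careful of the two.
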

\begin{proof}
Let $ S_{t} $ be a tempered stable subordinator.\footnote{See \cite{Kim01}} An \textit{elliptical tempered stable process} $ \textbf{X}_{t} $ is defined by subordinating a n-dimensional multivariate Brownian motion $ \textbf{B}_{s} $ and tempered stable subordinator $ S_{t} $. \footnote{Since the ETS distribution is a subclass (symmetric) of MNTS distribution, so the definition of the elliptical tempered stable  process is well defined.} Therefore, the centered elliptical tempered stable process $ \textbf{X}_{t} $ is defined
as
\begin{equation}
\nonumber \textbf{X}_{t}=\textbf{B}_{S_{t}},
\end{equation}
where $ \textbf{B}_{s} $ denotes a centered Brownian motion with covariance matrix $ \boldsymbol{\Sigma} $. Since $ \textbf{B}_{s} $ can be degenerate as $ chol(\boldsymbol{\Sigma}) \textbf{W}_{s}$, we have 
\begin{equation}
\nonumber\textbf{X}_{t}=chol(\boldsymbol{\Sigma}) \textbf{W}_{s},
\end{equation}
where $ \textbf{W}_{t} $ is a n-dimensional process with each dimension being an independent standard Wiener process, and $ chol(\boldsymbol{\Sigma}) $ is lower triangle Cholesky factor. On the other hands, the covariance matrix $ \boldsymbol{\Sigma} $ can be decomposed into its vector of standard deviations $ \sigma $ and correlation matrix \textbf{P} , i.e.,
\begin{equation}
\nonumber\boldsymbol{\Sigma}=diag(\sigma)\textbf{P}diag(\sigma),
\end{equation} 
therefore, 
\begin{equation}\label{equ03}
\textbf{X}_{t}=diag(\sigma)chol(\textbf{P}) \textbf{W}_{s},
\end{equation}

Now by setting $ t=1 $ in equation (\ref{equ03}), the favorite has been obtained. 
\end{proof}

Now, we explain the link between fractional PDE, TID distribution, and particularly ETS distribution. In fact, we will introduce an initial value problem that the fundamental solution of it gives the PDF of TID distributions and particularly ETS distribution.
 
\begin{theorem}\label{The01}
Let $\textbf{X}_t$ be the position of a particle at time $t>0$, and $n-$dimentional Euclidean space $\mathbb{R}^n$. Let $P(\textbf{X},t)$ denote the density of $X_t$ where the vector $\textbf{X}=(X_1,X_2,\cdots,X_n)\in\mathbb{R}^n$. Then the fundamental solution of the fractional PDE as equation (\ref{equ30}) gives the PDF of the TID distribution
\begin{eqnarray}\label{equ30}
\frac{\partial P(\textbf{X},t)}{\partial t}=i\langle\textbf{m},\nabla P(\textbf{X},t)\rangle+\nabla_{R}^{\alpha}P(\textbf{X},t),
\end{eqnarray}
where $\textbf{m}\in\mathbb{R}^n$, $\nabla=(\frac{\partial}{\partial X_1},\frac{\partial}{\partial X_2},\cdots,\frac{\partial}{\partial X_n})$, and 
\begin{equation}
\nabla_{R}^{\alpha}f(\textbf{X})=F^{-1}\left( \left[ \int_{\mathbb{R}^n}\psi_{\alpha}(\left\langle \textbf{u},\textbf{X}\right\rangle )R(dx)\right] \hat{f}(\textbf{u}) \right), 
\end{equation}
where $ R $ and $ \psi $ are as the same as Definition \ref{TID:CF01}. And also, $ \hat{f}(\textbf{u}) $ and $ F^{-1} $ are the Fourier transform of $ f $ and inverse of Fourier transform, respectively. Furthermore, the initial condition of equation (\ref{equ30}) is $P(X_0=0)=1$.
\end{theorem}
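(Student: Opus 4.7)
The plan is to convert the fractional PDE (\ref{equ30}) into a first-order linear ODE in $t$ by applying the spatial Fourier transform, to solve the ODE using the given initial data, and then to identify the resulting function with the characteristic function of the TID distribution supplied by Definition \ref{TID:CF01}.

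First I would take the Fourier transform of both sides of (\ref{equ30}) in the variable $\textbf{X}$. The drift term $i\langle\textbf{m},\nabla P\rangle$ becomes the multiplier $i\langle\textbf{u},\textbf{m}\rangle$ acting on $\hat{P}(\textbf{u},t)$ (after accounting for the sign convention of $\nabla$ on the Fourier side), while by the very definition of the nonlocal operator, $\nabla_R^\alpha P$ contributes the multiplier $\int_{\mathbb{R}^n}\psi_\alpha(\langle\textbf{u},\textbf{x}\rangle)R(dx)$. The PDE is thus reduced, at each fixed $\textbf{u}$, to
\begin{equation}\nonumber
\frac{\partial \hat{P}(\textbf{u},t)}{\partial t}=\left[i\langle\textbf{u},\textbf{m}\rangle+\int_{\mathbb{R}^n}\psi_{\alpha}(\langle\textbf{u},\textbf{x}\rangle)R(dx)\right]\hat{P}(\textbf{u},t).
\end{equation}

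Next I would feed in the initial condition $P(X_0=0)=1$, i.e.\ $P(\textbf{X},0)=\delta(\textbf{X})$, which gives $\hat{P}(\textbf{u},0)=1$. The unique solution of the ODE is
\begin{equation}\nonumber
\hat{P}(\textbf{u},t)=\exp\!\left\{t\left[i\langle\textbf{u},\textbf{m}\rangle+\int_{\mathbb{R}^n}\psi_{\alpha}(\langle\textbf{u},\textbf{x}\rangle)R(dx)\right]\right\}.
\end{equation}
Comparing this at $t=1$ with Definition \ref{TID:CF01} shows that $\hat{P}(\textbf{u},1)$ coincides with the characteristic function $\Phi_{\textbf{X}}(\textbf{u})$ of $\textbf{X}\sim TID_{\alpha}(R,\textbf{m})$. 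By uniqueness of the Fourier transform, the fundamental solution at $t=1$ is the PDF of the TID distribution; for general $t$ it is the density of the associated TID L\'{e}vy process at time $t$, which traces out the whole TID family as $R$ and $\textbf{m}$ are scaled.

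The main obstacle I anticipate is the rigorous handling of the nonlocal operator $\nabla_R^\alpha$. One has to verify that the symbol $\int_{\mathbb{R}^n}\psi_{\alpha}(\langle\textbf{u},\textbf{x}\rangle)R(dx)$ is well defined and locally bounded, using the integrability of $R$ inherited from its role as a tempered stable spectral measure together with the controlled behaviour of $\psi_{\alpha}$ built from the confluent hypergeometric function; and that $\nabla_R^\alpha$ acts on a function class rich enough to contain the fundamental solution, so that the Fourier inversion in its definition commutes with the time derivative. A secondary point, needed so that the solution is interpretable as a PDF, is that the bracketed expression above is a genuine L\'{e}vy--Khinchin exponent, which follows from infinite divisibility of the TID distribution already recorded in Definition \ref{TID:CF01}.
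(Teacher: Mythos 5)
Your proposal is correct and follows essentially the same route as the paper's own proof: Fourier transform in $\textbf{X}$, reduction to a constant-coefficient ODE in $t$ with $\hat{P}(\textbf{u},0)=1$, exponential solution, and identification with the characteristic function in Definition \ref{TID:CF01}. Your added remarks on the well-definedness of the symbol of $\nabla_R^{\alpha}$ and the explicit $t=1$ identification are refinements the paper's proof glosses over, but they do not change the argument.
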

\begin{proof} 
Before we start the proof, let $ f(\textbf{X},t):\mathbb{R}^n\times\mathbb{R}^+\longrightarrow\mathbb{C} $ be a function of $ \textbf{X}\in\mathbb{R}^n $ and $ t\in\mathbb{R}^+ $ such that $ f\in L^1(\mathbb{R}^n) $. The Fourier and inverse of Fourier transform will be given as
\begin{eqnarray}
\hat{f}(\textbf{u})=F(f(\textbf{X}))=\int\exp(-i\left\langle\textbf{X},\textbf{u} \right\rangle )f(\textbf{X})dX,\\
f(\textbf{X})=F^{-1}(\hat{f}(\textbf{u}))=\dfrac{1}{(2\pi)^n}\int\exp(i\left\langle\textbf{X},\textbf{u} \right\rangle )f(\textbf{u})dX,
\end{eqnarray}
The strategy for proof is the same as procedure was explained by Fallahgoul \textit{et al.}\footnote{It should be noted that some other fractional PDEs for multivariate stable and geometric stable distributions have been introduced. The difference between all kinds of these fractional PDEs backs to the structure of the L\`{e}vy measure of each of them. More information regarding univarite and multivarite fractional PDE regradless stable and geometric stable distribution can be found in \cite{Fallahgoul01, Fallahgoul02}.}

Taking the Fourier transform of equation (\ref{equ30}) and Fourier transform properties, we obtain
\begin{eqnarray}\label{equ31}
\frac{\partial \hat{P}(\textbf{u},t)}{\partial t}=i\langle\textbf{m},\textbf{u} \rangle\hat{P}(\textbf{u},t)+ \left[ \int_{\mathbb{R}^n}\psi_{\alpha}(\left\langle \textbf{u},\textbf{X}\right\rangle )R(dx)\right] \hat{P}(\textbf{u},t) ,
\end{eqnarray}
where $ \hat{P}(\textbf{u},t) $ is the Fourier transform of $ P(\textbf{X},t) $ with respect to $ \textbf{X} $. It should be noted that the initial condition $P(\textbf{X},0)= \delta(\textbf{X}) $ also converts to $ \hat{P}(\textbf{u},t)=1$. Now, equation (\ref{equ31}) is an initial value problem which all coefficient of it is constant, so we will have
\begin{equation}\label{equ32}
\hat{P}(\textbf{u},t)=\exp\left(i\langle\textbf{m},\textbf{u}t \rangle+t\int_{\mathbb{R}^n}\psi_{\alpha}(\left\langle \textbf{u},\textbf{X}\right\rangle )R(dx) \right). 
\end{equation}
Comparing equation (\ref{equ32}) and the CF of the TID distribution (equation (\ref{equ07})), we find that they are identical for a tempered stable distribution with $ \alpha\in(0,2) $. Therefore, the Green function solution of equation (\ref{equ30}) yields the entire PDF classes of the TID distribution. 
\end{proof}
The following corollary are the direct results of Theorem \ref{The01}.
\begin{corollary}
Let the operator $ \nabla_{R}^{\alpha} $ be
\begin{equation}
\nabla_{R}^{\alpha}f(\textbf{X})=F^{-1}\left( \left[ \int_{\mathbb{R}^n}\psi_{\alpha}^0(\left\langle \textbf{u},\textbf{X}\right\rangle )R(dx)\right] \hat{f}(\textbf{u}) \right), 
\end{equation}
in Theorem (\ref{The01}), and also $ \alpha\in(0,1) $. Then the Green function solution of equation (\ref{equ30}) yields the entire PDF classes of the TID distribution. 
\end{corollary}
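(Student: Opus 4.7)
The plan is to repeat the argument used for Theorem \ref{The01} almost verbatim, with $\psi_\alpha$ replaced by $\psi_\alpha^0$ throughout, and to observe that the output then matches the alternative characteristic function of the $TID_\alpha^0(R,\textbf{m}_0)$ family recorded in Definition \ref{TID:CF01}.

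Step one is to Fourier transform the PDE (\ref{equ30}) in the spatial variable $\textbf{X}$. Because the new operator $\nabla_R^\alpha$ is defined directly through its Fourier symbol, one has
\begin{equation*}
F\bigl(\nabla_R^\alpha P(\textbf{X},t)\bigr)(\textbf{u},t) = \left[\int_{\mathbb{R}^n}\psi_\alpha^0(\langle \textbf{u},\textbf{X}\rangle)\,R(dx)\right]\hat P(\textbf{u},t),
\end{equation*}
and the drift term contributes $i\langle \textbf{m},\textbf{u}\rangle\hat P$ in the usual way. The PDE therefore collapses to the first-order linear ODE
\begin{equation*}
\frac{\partial \hat P(\textbf{u},t)}{\partial t} = \left\{i\langle \textbf{m},\textbf{u}\rangle + \int_{\mathbb{R}^n}\psi_\alpha^0(\langle \textbf{u},\textbf{X}\rangle)\,R(dx)\right\}\hat P(\textbf{u},t),
\end{equation*}
with initial datum $\hat P(\textbf{u},0) = 1$ coming from $P(\textbf{X},0) = \delta(\textbf{X})$.

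Step two is to integrate this ODE. Since the bracketed coefficient depends only on $\textbf{u}$ and not on $t$, the solution is
\begin{equation*}
\hat P(\textbf{u},t) = \exp\left(i\langle \textbf{m},\textbf{u}\rangle t + t\int_{\mathbb{R}^n}\psi_\alpha^0(\langle \textbf{u},\textbf{X}\rangle)\,R(dx)\right).
\end{equation*}
Evaluated at $t=1$ and with $\textbf{m}$ identified with $\textbf{m}_0$, this coincides precisely with the alternative CF of a $TID_\alpha^0(R,\textbf{m}_0)$ random vector as displayed in Definition \ref{TID:CF01}; inverting the Fourier transform shows that the Green function of (\ref{equ30}) is exactly the PDF of this distribution, and varying $R$ and $\textbf{m}_0$ sweeps out the full subfamily.

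The main point that requires attention, rather than being a copy from Theorem \ref{The01}, is the admissibility of the alternative representation: the CF formula built from $\psi_\alpha^0$ is available in Definition \ref{TID:CF01} only under $\alpha\in(0,1)$, because outside this range the separate gamma-function pieces comprising $\psi_\alpha^0$ fail to be jointly integrable against $R(dx)$. Imposing the stated restriction $\alpha\in(0,1)$ removes this obstruction, so no analytic ingredient beyond those already invoked in the proof of Theorem \ref{The01} is required.
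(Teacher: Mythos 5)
Your proposal is correct and follows essentially the same route as the paper, which states this result as a direct consequence of Theorem \ref{The01} and implicitly relies on the identical Fourier-transform/ODE argument with $\psi_{\alpha}$ replaced by $\psi_{\alpha}^{0}$ and the comparison made against the alternative CF in Definition \ref{TID:CF01}. Your added remark on why the restriction $\alpha\in(0,1)$ is needed is a sensible clarification but does not change the argument.
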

\begin{corollary}\label{Cor:ETSD}
Let the operator $ \nabla_{R}^{\alpha} $ be
\begin{equation}
\nabla_{R}^{\alpha}f(\textbf{X})=F^{-1}\left( \left[  2^{-\alpha/2-1}\Gamma(-\dfrac{\alpha}{2}) \int_{\mathbb{R}^n} \left( _1F_1\left( -\dfrac{\alpha}{2},\dfrac{1}{2};\dfrac{-1}{2} \left\langle \textbf{u},\textbf{X}\right\rangle^2 \right)-1 \right) R(dx)\right] \hat{f}(\textbf{u}) \right), 
\end{equation}
in Theorem (\ref{The01}), where $ \alpha\in(0,2)=\frac{p}{q} $. Then the fundamental solution $ \hat{P}(\textbf{X},t) $ of equation
\begin{eqnarray}\label{equ33}
\frac{\partial P(\textbf{X},t)}{\partial t}=\nabla_{R}^{\alpha}P(\textbf{X},t),
\end{eqnarray}
with the initial condition $ P(\textbf{X},0)=\delta(\textbf{X}) $ is  the  density  of  the  ETSD.
\end{corollary}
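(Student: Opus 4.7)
The plan is to follow the same Fourier-transform strategy already used in the proof of Theorem \ref{The01}, specializing it to the symmetric case. First I would apply the spatial Fourier transform to both sides of \eqref{equ33}. By the very definition of $\nabla_R^{\alpha}$ given in the statement, and since the Fourier transform turns convolution-type symbol multiplication into ordinary multiplication, the transformed equation becomes an ODE in $t$,
\begin{equation*}
\frac{\partial \hat{P}(\textbf{u},t)}{\partial t}
= \Bigl[\, 2^{-\alpha/2-1}\Gamma\!\Bigl(-\tfrac{\alpha}{2}\Bigr)\int_{\mathbb{R}^n}\Bigl({}_1F_1\!\Bigl(-\tfrac{\alpha}{2},\tfrac{1}{2};-\tfrac{1}{2}\langle \textbf{u},\textbf{x}\rangle^2\Bigr)-1\Bigr)R(dx)\Bigr]\,\hat{P}(\textbf{u},t),
\end{equation*}
with constant coefficient (in $t$) for each fixed $\textbf{u}$. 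The initial condition $P(\textbf{X},0)=\delta(\textbf{X})$ Fourier-transforms to $\hat{P}(\textbf{u},0)=1$.

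Second, I would integrate this linear ODE explicitly to obtain
\begin{equation*}
\hat{P}(\textbf{u},t)
= \exp\!\Bigl\{\, t\cdot 2^{-\alpha/2-1}\Gamma\!\Bigl(-\tfrac{\alpha}{2}\Bigr)\int_{\mathbb{R}^n}\Bigl({}_1F_1\!\Bigl(-\tfrac{\alpha}{2},\tfrac{1}{2};-\tfrac{1}{2}\langle \textbf{u},\textbf{x}\rangle^2\Bigr)-1\Bigr)R(dx)\Bigr\}.
\end{equation*}
At $t=1$, the right-hand side is identical to the characteristic function of a symmetric TID random vector with spectral measure $R$, as given in Lemma \ref{pro01}, equation \eqref{equ05}. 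By Definition \ref{def05}, this is precisely the CF of an ETSD. Uniqueness of the characteristic function then identifies $P(\textbf{X},1)$ with the ETSD density, and $\hat{P}(\textbf{u},t)$ more generally corresponds to a rescaled ETSD along the time parameter (using the fact, recalled in the proof of Corollary \ref{The06}, that the underlying symmetric MNTS process recovers the ETSD at $t=1$).

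The step I expect to be the most delicate is the justification of the Fourier inversion that relates $\nabla_R^{\alpha}$ to multiplication by its symbol: one needs $P(\cdot,t)\in L^1(\mathbb{R}^n)$ (or at least to belong to a class on which the formal computation is rigorous), and the integrability of the symbol at large $|\textbf{u}|$ is not obvious because of the $\langle \textbf{u},\textbf{x}\rangle^2$ argument inside ${}_1F_1$. However, the finiteness of the spectral measure $R$ and the known asymptotics of the Kummer function (giving the same decay that already makes the CF in Definition \ref{TID:CF01} well defined) guarantee this; in practice one can either work in the sense of tempered distributions or invoke the fact that the symbol is, up to a constant, exactly the log-CF of a bona fide probability measure, so the exponential in $t$ remains an absolutely integrable CF and admits an inverse Fourier transform. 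Everything else is a routine repetition of the argument of Theorem \ref{The01} with the drift term $i\langle \textbf{m},\nabla P\rangle$ removed.
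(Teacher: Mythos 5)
Your argument is correct and is essentially the paper's own: the paper treats this corollary as a direct consequence of Theorem \ref{The01}, whose proof is exactly the Fourier-transform reduction to a constant-coefficient ODE in $t$ followed by comparison of the resulting exponential with the characteristic function in equation \eqref{equ05}, here with the drift term absent. Your additional remarks on justifying the Fourier inversion go beyond what the paper records, but the core route is the same.
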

\begin{remark}
In Corollary \ref{Cor:ETSD}, the location parameter is zero. But, based on the definition of symmetric TID about any real vector it can ba changed. In fact, in order to have the full form of stochastis representation definition of elliptical distribution, the location vector must be non-zero.
\end{remark}
\section{Analytic Approximation of Solution of Fractional PDE}
In this section, we derive the PDF approximation of multivariate tempered stable distributions by using the Homotopy Perturbation Method (hereafter HPM), Adomian Decomposition Method (hereafter ADM), and Variational Iteration Method (hereafter VIM).\footnote{See, for example \cite{He01}, \cite{Abdelrazec}, and \cite{He02}, among others.} It should be noted that the same procedure for getting the approximation of the PDF of fractional PDE regardless the ETS distribution can be applied.\footnote{It should be noted the details of computations have been omitted, and the reader can refer to \cite{Fallahgoul02} and \cite{Fallahgoul01} for more details. }  

\subsection{PDF Approximation of the TID Distribution Using the HPM}
To solve equation \ref{equ30} with initial condition $P{\rm (}X{\rm (0)=0)=1}$ using the HPM, we construct the following homotopy 
\begin{equation} \label{GrindEQ__5_1_} 
H{\rm (}p,V{\rm )=(1-}p{\rm )}\left(\frac{\partial V}{\partial t}{\rm -}\frac{\partial P_0}{\partial t}\right){\rm +}p\left(\frac{\partial V}{\partial t}{\rm +}<\textbf{m}{\rm ,}\nabla V>{\rm -}{\nabla }^{\alpha }_R\right){\rm =0.} 
\end{equation} 
Suppose the solution of equation \eqref{equ30} has the form 
\begin{equation} \label{GrindEQ__5_2_} 
V{\rm (}\textbf{X},t{\rm )=}p^0V_0{\rm +}p^{{\rm 1}}V_{{\rm 1}}{\rm +}p^{{\rm 2}}V_{{\rm 2}}{\rm +}\cdots {\rm +}p^{n{\rm -}{\rm 1}}V_{n{\rm -}{\rm 1}}{\rm +}\cdots {\rm .} 
\end{equation} 
Substituting equation \eqref{GrindEQ__5_2_} into equation \eqref{GrindEQ__5_1_}, and comparing coefficients of terms with identical powers of $p$, leads to 
\[p^0{\rm :\ \ \ \ \ \ \ \ }\frac{\partial V_0}{\partial t}{\rm -}\frac{\partial P_0}{\partial t}{\rm =0,}\] 
\[p^{{\rm 1}}{\rm :\ \ \ \ \ \ \ \ }\frac{\partial V_{{\rm 1}}}{\partial t}{\rm =}<\textbf{m}{\rm ,}\nabla V_0>{\rm +}{\nabla }^{\alpha }_RV_0,\] 
\[\vdots \] 
\[p^{n{\rm +1}}{\rm :\ \ \ \ \ \ \ \ }\frac{\partial V_{n{\rm +1}}}{\partial t}{\rm =}<\textbf{m}{\rm ,}\nabla V_n>{\rm +}{\nabla }^{\alpha }_RV_n.\] 

For simplicity, we take $V_0{\rm (}\textbf{X},t{\rm )=}P_0{\rm (}\textbf{X},t{\rm )}$. According to compared coefficients, we derive the following recurrent relation 
\[V_0{\rm (}\textbf{X},t{\rm )=}P_0{\rm (}\textbf{X},t{\rm ),}\] 
\[V_{{\rm 1}}{\rm (}\textbf{X},t{\rm )=}\int^t_0{}\left({\rm }<\textbf{m}{\rm ,}\nabla V_0>{\rm +}{\nabla }^{\alpha }_RV_0\right)dt{\rm ,\ \ \ \ \ \ \ \ }V_{{\rm 1}}{\rm (}\textbf{X}{\rm ,0)=0,}\] 
\[\vdots \] 
\[V_{n{\rm +1}}{\rm (}\textbf{X},t{\rm )=}\int^t_0{}\left({\rm }<\textbf{m}{\rm ,}\nabla V_n>{\rm +}{\nabla }^{\alpha }_RV_n\right)dt{\rm ,\ \ \ \ \ \ \ \ }V_{n{\rm +1}}{\rm (}\textbf{X}{\rm ,0)=0.}\] 

The solution is 
\[P\left(\textbf{X},t\right){\rm =}\mathop{{\rm lim}}_{p\to {\rm 1}}V\left(\textbf{X},t\right)\] 
\[{\rm =}V_0{\rm (}\textbf{X},t{\rm )+}V_{{\rm 1}}{\rm (}\textbf{X},t{\rm )+}V_{{\rm 2}}{\rm (}\textbf{X},t{\rm )+}\cdots {\rm +}V_{n{\rm +1}}{\rm (}\textbf{X},t{\rm )+}\cdots {\rm ,}\] 
\[{\rm =}\boldsymbol{\delta}(\textbf{X}) {\rm +}\int^t_0{}\left({\rm }<\textbf{m}{\rm ,}\nabla V_0>{\rm +}{\nabla }^{\alpha }_RV_0\right)dt{\rm +}\int^t_0{}\left({\rm }<\textbf{m}{\rm ,}\nabla V_{{\rm 1}}>{\rm +}{\nabla }^{\alpha }_RV_{{\rm 1}}\right)dt{\rm +}\cdots \] 
\[{\rm +}\int^t_0{}\left({\rm }<\textbf{m}{\rm ,}\nabla V_n>{\rm +}{\nabla }^{\alpha }_RV_n\right)dt{\rm +}\cdots {\rm .}\] 
Therefore, 
\[P{\rm (}\textbf{X},t{\rm )=}\boldsymbol{\delta}(\textbf{X}){\rm +}\sum^{\infty }_{k{\rm =0}}{}\left[\int^t_0{}\left({\rm }<\textbf{m}{\rm ,}\nabla V_k>{\rm +}{\nabla }^{\alpha }_RV_k\right)dt\right].\] 
\subsection{PDF Approximation of TID Distribution Using the ADM}
We will solve equation (\ref{equ30}) with initial condition $P{\rm (}\textbf{X}{\rm (0)=0)=1}$ using the ADM. To do so, we construct the following recurrence relation 
\[V_0{\rm (}\textbf{X},t{\rm )=}P_0{\rm (}\textbf{X},t{\rm )=}\boldsymbol{\delta}(\textbf{X}),\] 
\[V_{k{\rm +1}}{\rm (}\textbf{X},t{\rm )=}\int^t_0{}\left({\rm }<v{\rm ,}\nabla V_k>{\rm +}{\nabla }^{\alpha }_RV_k\right)dt{\rm ,\ \ \ \ \ \ \ \ }k\ge {\rm 0.}\] 

So, the solution is obtained as 
\[V_{{\rm 1}}{\rm (}\textbf{X},t{\rm )=}\int^t_0{}\left({\rm -}<v{\rm ,}\nabla V_0>{\rm +}{\nabla }^{\alpha }_RV_0\right)dt,\] 
\[\vdots \] 
\[V_{n{\rm +1}}\left(\textbf{X},t\right){\rm =}\int^t_0{}\left({\rm }\left\langle v{\rm ,}\nabla V_n\right\rangle {\rm +}{\nabla }^{\alpha }_RV_n\right)dt.\] 
\[\vdots \] 
Therefore,  
\begin{equation}\nonumber
P{\rm (}\textbf{X},t{\rm )=}\sum^{\infty }_{k{\rm =0}}{}V_k{\rm (}\textbf{X},t{\rm )=}\boldsymbol\delta {\rm (}\textbf{X}{\rm )+}\sum^{\infty }_{k{\rm =0}}{}\left[\int^t_0{}\left({\rm }<v{\rm ,}\nabla V_k>{\rm +}{\nabla }^{\alpha }_RV_k\right)dt\right].
\end{equation}
\subsection{PDF Approximation of TID Distribution Using the VIM}
The analytic approximation of the PDF of TID via VIM is considered. In fact, the fundamental solution of equation (\ref{equ30}) with initial condition $P{\rm (}\textbf{X}{\rm (0)=0)=1}$ via VIM is obtained. We set the following recurrence relation
\begin{equation}\label{equ08}
V_{n+1}(\textbf{X},t)=V_{n}(\textbf{X},t)+\lambda\int_{0}^{t}\left( \dfrac{\partial V_n(\textbf{X},s)}{\partial s}+(\left\langle \textbf{m},\nabla V_n(\textbf{X},s)\right\rangle -\nabla_R^{\alpha}V_n(\textbf{X},s)\right) ds,\qquad n=0,1,2,\cdots
\end{equation}
so
\begin{equation*}
\delta V_{n+1}(\textbf{X},t)=\delta V_{n}(\textbf{X},t)+\delta\int_{0}^{t}\lambda\left( \dfrac{\partial P_n(\textbf{X},s)}{\partial s}+(\left\langle \textbf{m},\nabla P_n(\textbf{X},s)\right\rangle -\nabla_R^{\alpha}P_n(\textbf{X},s)\right) ds=0.
\end{equation*}
Based on the stationary condition of equation (\ref{equ08}), i.e. $ \lambda+1=0, \lambda'=0 $, the Lagrange multiplier turns out to be $ \lambda=-1 $.

Now, by substituting $ \lambda=-1 $ in equation (\ref{equ08}), we get the following variational iteration formula
\begin{equation*}
V_{n+1}(\textbf{X},t)=V_{n}(\textbf{X},t)-\int_{0}^{t}\left( \dfrac{\partial V_n(\textbf{X},s)}{\partial s}+(\left\langle \textbf{m},\nabla V_n(\textbf{X},s)\right\rangle -\nabla_R^{\alpha}V_n(\textbf{X},s)\right) ds,\qquad n=0,1,2,\cdots
\end{equation*}
where $ V_0(\textbf{X},t)=P_0(\textbf{X},t)= \delta(\textbf{X})$.

Therefore
\begin{equation}
 \begin{tabular}{lllll}
 $P(\textbf{X},t)$&$=$&$\lim\limits_{n\longrightarrow\infty}V_n(\textbf{X},t)$\\&
 $=$&$V_0(\textbf{X},t)+\sum_{k=0}^{\infty}\left[ \int_{0}^{t}(-\left\langle \textbf{m},\nabla V_k\right\rangle +\nabla_R^{\alpha}V_k)dt\right] ,$\\&
  $=$&$\delta(\textbf{X})+\sum_{k=0}^{\infty}\left[ \int_{0}^{t}(-\left\langle \textbf{m},\nabla V_k\right\rangle +\nabla_R^{\alpha}V_k)dt\right].$
 \end{tabular}
\end{equation}
In this manner, the rest of the components of the VIM can be obtained. If we compute more terms, then we can show that $ P(\textbf{X}, t) $ is the TID's PDF with respect to $ \textbf{X} $, as $ \textbf{X}\sim TID_{\alpha}(R,t\textbf{m}) $. 
\section{Acknowledgment}
Hassan A. Fallahgoul acknowledges financial support from a Belgian Federal Science Policy Office (BELSPO) grant. He is a beneficiary of a mobility grant from the BELSPO co-funded by the Marie Curie Actions from the European Commission. Also, the comments and support of Professor David Veredas and Davy Paindaveine are gratefully acknowledged.

\bibliography{mybibfile}{}

\end{document}